\pgfplotsset{compat=1.15}
\definecolor{ccqqqq}{rgb}{0.8,0.,0.}
\definecolor{qqqqff}{rgb}{0.,0.,1.}
\definecolor{xfqqff}{rgb}{0.4980392156862745,0.,1.}
\numberwithin{equation}{section}
\newcommand{\eps}{\varepsilon}
\renewcommand{\H}{\mathcal{H}}
\newcommand{\N}{\mathbb{N}}
\renewcommand{\S}{\mathcal{S}}
\newcommand{\R}{\mathbb{R}}
\renewcommand{\div}{\mathrm{div}\,}
\newcommand{\Tan}{\mathrm{Tan}\,}
\newcommand{\1}{\mathbbm{1}}
\newtheorem{theorem}{Theorem}[section]
\newtheorem{lemma}[theorem]{Lemma}
\newtheorem{proposition}[theorem]{Proposition}
\theoremstyle{definition}
\newtheorem{remark}[theorem]{Remark}
\title{Endpoint Fourier restriction and unrectifiability}
\author{Giacomo Del Nin}
\address{Giacomo Del Nin: Mathematics Institute, The University of Warwick, Zeeman Building, CV47HP Coventry, UK.}
\email{\href{mailto:Giacomo.Del-Nin@warwick.ac.uk}{Giacomo.Del-Nin@warwick.ac.uk}}
\author{Andrea Merlo}
\address{Andrea Merlo: Université Paris-Saclay, 307 Rue Michel Magat Bâtiment, 91400 Orsay, France.}
\email{\href{mailto:andrea.merlo@u-psud.fr}{andrea.merlo@u-psud.fr}}
\begin{document}

\maketitle

\begin{abstract}
    We show that if a measure of dimension $s$ on $\R^d$ admits $(p,q)$ Fourier restriction for some endpoint exponents allowed by its dimension, namely $q=\tfrac{s}{d}p'$ for some $p>1$, then it is either absolutely continuous or $1$-purely unrectifiable.
\end{abstract}
{\let\thefootnote\relax\footnote{{\textit{MSC (2020)}: 28A75, 42B10.}}}
{\let\thefootnote\relax\footnote{{\textit{Keywords}: Fourier, restriction, endpoint, rectifiability, unrectifiable, decomposability bundle.}}}

{\let\thefootnote\relax\footnote{The first named author has received funding from the European Research Council (ERC) under the European Union's Horizon 2020 research and innovation programme, grant agreement No 757254 (SINGULARITY). The second named author is supported by the Simons Foundation Wave Project. The second named author would also like to thank prof. Paolo Ciatti for the fruitful discussion on the topic carried out throughout the years.}}

\section{Introduction}
%Given a Schwartz function $f\in\S(\R^n)$ its Fourier transform is defined by
%\[
%\F f(\xi):=\int e^{-2\pi i\xi\cdot x}f(x)dx.
%\]
A measure $\mu$ on $\R^d$ is said to admit $(p,q)$ restriction, indicated by $R_\mu(p\to q)$, if there exists a constant $C$ such that for every $f$ in the Schwartz space $\mathcal S(\R^d)$
\begin{equation}\label{eq:restriction}
\|\hat f\|_{L^q(\mu)}\leq C\|f\|_{L^p(\R^d)}.
\end{equation}
Given a measure $\mu$ a broadly open problem is understanding the relation between the admissible exponents $p,q$ in \eqref{eq:restriction}, the dimension of the measure, and its geometric properties.
%One of the most important cases concerns the case where $\mu$ is the surface measure on the sphere, for which the restriction conjecture makes predictions about the precise range of admissible exponents \cite{tao}.
Assuming that $R_\mu(p\to q)$ holds, it is possible to obtain some necessary conditions on $p$ and $q$ using the so-called Knapp example: if $\mu$ is assumed to have dimension $s$ (without any assumption on its geometry), then $q\leq \tfrac{s}{d}p'$  (see Lemma \ref{lem:necessary_cond_dim}); if we also assume that $\mu$ is $s$-rectifiable of class $C^{1,\alpha}$, then $q\leq\tfrac{s}{d+\alpha(d-s)}p'$ (see Lemma \ref{lem:necessary_cond_rectif}). We will refer to the equality $q=\tfrac{s}{d}p'$ as the \textit{endpoint case}, since it is the extremal case allowed by the dimension.
We are interested in understanding what information on $\mu$ can be inferred from the only assumption that $R_\mu(p\to q)$ holds in the endpoint case $q=\tfrac{s}{d}p'$, for some $p>1$. The following is our main result.

\begin{theorem}[Dicotomy for endpoint estimates]\label{thm:endpoint_implies_unrectif}
Let $\mu$ be a measure on $\R^d$, satisfying $0<\Theta^{*s}(\mu,x)<\infty$ for $\mu$-a.e. $x\in\R^d$. Suppose that $R_\mu(p\to q)$ holds at the endpoint $q=\frac{s}{d}p'$, for some $p>1$. Then
% \[
% V(\mu,x)=\R^n\quad\text{for $\mu$-a.e. $x\in\R^n$},
% \]
% or
% \[
% V(\mu,x)=\{0\}\quad\text{for $\mu$-a.e. $x\in\R^n$}.
% \]
%In particular
either $q=p'$ and $\mu\ll\mathcal{L}^d$, or $\mu$ is supported on a $1$-purely unrectifiable set.
\end{theorem}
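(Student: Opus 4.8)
The plan is to split the argument along the elementary observation that, at the endpoint, $q=p'$ holds precisely when $s=d$. If $s=d$ there is nothing to do with the restriction estimate: the hypothesis $\Theta^{*d}(\mu,x)<\infty$ for $\mu$-a.e.\ $x$ already forces $\mu\ll\mathcal{L}^d$, via the standard density comparison $\mu\llcorner\{\Theta^{*d}(\mu,\cdot)<k\}\le C_d\,k\,\mathcal{L}^d$ together with the Radon--Nikodym theorem. We may therefore assume $q<p'$, equivalently $s<d$, and the target becomes: $\mu$ is carried by a $1$-purely unrectifiable Borel set.

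For the unrectifiability I would pass through the \emph{decomposability bundle} $V(\mu,\cdot)$ of Alberti--Marchese. By its theory, $\mu$ is carried by a $1$-purely unrectifiable Borel set if and only if $V(\mu,x)=\{0\}$ for $\mu$-a.e.\ $x$, so it suffices to rule out $\mu(\{\dim V(\mu,\cdot)\ge1\})>0$. Suppose it were positive. Then, by the construction of the bundle, $\mu$ dominates a nonzero submeasure of the form $\mu_1=\int \H^1\llcorner\gamma\, d\pi(\gamma)\le\mu$, with $\pi$ a finite measure on a family of Lipschitz curves; after a Lusin/Egorov-type reduction we may take all these curves to be $C^1$ with equicontinuous tangents close to a fixed direction, say $e_d$, over a common arclength window, on a piece of positive $\mu_1$-mass. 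Since $\mu_1\le\mu$, the estimate $R_{\mu_1}(p\to q)$ holds at the same endpoint, and from $\Theta^{*s}(\mu,\cdot)\in(0,\infty)$ together with $\tfrac{d\mu_1}{d\mu}>0$ $\mu_1$-a.e.\ one gets $0<\Theta^{*s}(\mu_1,x)<\infty$ for $\mu_1$-a.e.\ $x$.

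The contradiction then comes from a Knapp example adapted to the curves rather than to a ball. Fix a good point $x_0$ and a sequence of scales $\delta\to0$ along which $\mu_1(B_\delta(x_0))\gtrsim\delta^s$. One chooses a length $L=L(\delta)$ with $L/\delta\to\infty$ slowly enough that every curve of the family meeting $B_\delta(x_0)$ stays, over an arclength $\sim L$, inside a box $Q$ of sidelengths $\sim\delta$ in the directions orthogonal to $e_d$ and $\sim L$ along $e_d$; following the curves that pass through $B_\delta(x_0)$ upgrades the mass by a factor $L/\delta$,
\[
\mu_1(Q)\ \gtrsim\ \tfrac{L}{\delta}\,\mu_1(B_\delta(x_0))\ \gtrsim\ L\,\delta^{s-1}.
\]
Taking $f\in\mathcal{S}(\R^d)$ with $\widehat f$ a smooth bump equal to $1$ on $Q$ and adapted to it gives $\widehat f\ge1$ on $Q$ and $\|f\|_{L^p(\R^d)}\lesssim|Q|^{1/p'}\asymp(\delta^{d-1}L)^{1/p'}$, so $R_{\mu_1}(p\to q)$ forces
\[
\big(L\,\delta^{s-1}\big)^{1/q}\ \lesssim\ \big(\delta^{d-1}L\big)^{1/p'}.
\]
Since $q=\tfrac{s}{d}p'$ we have $q/p'=s/d$, and raising to the power $q$ and simplifying the exponents reduces this to $L^{(d-s)/d}\lesssim\delta^{(d-s)/d}$, i.e.\ $L\lesssim\delta$, contradicting $L/\delta\to\infty$. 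Hence $V(\mu,\cdot)=\{0\}$ $\mu$-a.e., which is the desired conclusion.

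Thus the Fourier-analytic input is only an anisotropic Knapp estimate, a refinement of the one behind Lemma \ref{lem:necessary_cond_dim} (in particular the single-curve case $\pi=\delta_{\gamma}$ already rules out $\mu$ charging a $C^1$ curve, with $\delta^{s-1}$ replaced by $1$). The step I expect to be the main obstacle is the geometric bookkeeping in the middle: extracting from a nontrivial decomposability bundle a curve-superposition submeasure with \emph{uniform} $C^1$ control on a set of positive mass, and calibrating $L$ against $\delta$ so that the long thin box genuinely captures an $L/\delta$-fold amount of $\mu_1$-mass. Once the family of curves is uniformly regular and the box correctly proportioned, the remaining estimates are routine.
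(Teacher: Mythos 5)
The $q=p'$ branch is fine (the density hypothesis alone gives $\mu\ll\mathcal{L}^d$ when $s=d$), and so are the transfer of the endpoint estimate and of the density bounds to a minorant $\mu_1\le\mu$. The fatal problem is the central mass estimate $\mu_1(Q)\gtrsim\frac{L}{\delta}\,\mu_1(B_\delta(x_0))$ for a box $Q$ of cross-section $\sim\delta$ and length $L$ with $L/\delta\to\infty$. After any Lusin/Egorov-type reduction the tangents of your curves lie within some \emph{fixed} tolerance $\eps_0>0$ of $e_d$; a curve whose tangent makes angle $\approx\eps_0$ with $e_d$ leaves the $\delta$-neighbourhood of the axis after arclength $\sim\delta/\eps_0$, so the requirement that all curves through $B_\delta(x_0)$ stay in the $\delta$-thin box over arclength $L$ forces $L\lesssim\delta/\eps_0$, i.e.\ $L/\delta$ bounded --- incompatible with $L/\delta\to\infty$. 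To confine over length $L$ you need the contributing curves aligned to within angle $\sim\delta/L$ of a single direction, a scale-dependent alignment that no fixed reduction of the family provides (and which is simply false for measures whose bundle is nontrivial because mass near a typical point comes from curves with tangents spread over a fixed angle, e.g.\ superpositions of lines with a Cantor set of slopes). A separate, more minor issue: the decomposability bundle only yields $\nu=\int\H^1\llcorner E_t\,dt\ll\mu$ with $E_t$ measurable \emph{pieces} of curves, so ``following the curve for length $L$'' picks up only $\H^1(E_t\cap Q)$, which requires an additional density-point/Egorov step to bound below; this part is repairable, the alignment is not.

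The natural repair of the alignment --- pigeonholing the curves through $B_\delta(x_0)$ into direction-cones of aperture $\delta/L$ and keeping the heaviest cone --- loses a factor $(\delta/L)^{d-1}$ of transversal curve mass, so the best guaranteed bound is $\mu_1(Q)\gtrsim L\,\delta^{s-1}(\delta/L)^{d-1}$. Feeding this into your Knapp computation at the endpoint $q=\tfrac{s}{d}p'$ gives $\bigl(L\delta^{s-1}(\delta/L)^{d-1}\bigr)^{1/q}\lesssim(\delta^{d-1}L)^{1/p'}$, and the ratio of the two sides is $(\delta/L)^{(d^2-2d+s)/d}\to0$: the inequality is satisfied for all $L\gg\delta$, so no contradiction survives. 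In other words, the endpoint exponents make the loss from enforcing alignment exactly fatal, which shows the gap is structural rather than bookkeeping. The paper avoids this by passing to tangent measures: the $r^{-s}$-normalized blow-up inherits the endpoint restriction with the same constant (Lemma \ref{lem:restriction_blowup}), and by Proposition \ref{prop:tangent_bundle} it is \emph{exactly} invariant along $V(\mu,x)$, i.e.\ of the form $\theta\otimes\H^k\llcorner V(\mu,x)$; Lemma \ref{lem:restriction_tensor} --- morally your Knapp estimate, but applied to a translation-invariant measure where the alignment is perfect and the length is infinite --- then forces $k=0$ unless $p=q'$. The missing idea in your argument is precisely this blow-up step, which converts ``decomposable along a direction'' into genuine invariance along that direction before any Fourier estimate is used.
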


A $1$-purely unrectifiable set is a set that intersects every Lipschitz curve in an $\H^1$-negligible set. Therefore Theorem \ref{thm:endpoint_implies_unrectif} is imposing  strong  geometric  constraints for measures satisfying the endpoint estimate, showing that all non absolutely continuous examples of such measures must be highly singular.

A natural question arises: are there even examples of measures of dimension $s$ (besides $s=d$) satisfying the endpoint restriction for some $q$, to which Theorem \ref{thm:endpoint_implies_unrectif} can be applied? 
If we require Ahlfors-David regularity on $\mu$ then there are a few negative results: as shown by Chen \cite[Proposition 3]{chen},  for $\tfrac{d}{2}\leq s<d$ there are no nontrivial AD-regular measures of dimension $s$, supported on a compact set, admitting a restriction $R_\mu(\tfrac{2d}{2d-s}\to q)$ for some $q$ (note that $p=\tfrac{2d}{2d-s}$ is the endpoint case when $q=2$, but the mentioned result states that the restriction does not hold, for the same $p$, even in the weaker case $q=1$). In the case $q=2$, using convolution powers, Chen and Seeger proved that a measure admitting an endpoint estimate $R_\mu(\tfrac{2d}{2d-s}\to 2)$ can not be AD-regular of dimension $s$ for any $0<s<d$, \cite[Proposition 4.4]{chen-seeger}, and the same proof can be adapted to also exclude the case where $0<\Theta^s_*(\mu,x)\leq \Theta^{*s}(\mu,x)<\infty$ for $\mu$-a.e. $x$.

On the other hand, if we drop the requirement of lower regularity, Chen and Seeger provide, for any $d\geq 1$ and $j\in \N$, an upper regular (but not lower regular) measure of dimension $s=\tfrac{d}{j}$ for which the endpoint estimate holds for $q=2$ \cite[Theorem A]{chen-seeger} and this provides an example where Theorem \ref{thm:endpoint_implies_unrectif} applies. 
Later \L{}aba and Wang extended the previous result to all $s\in (0,d)$, not necessarily of the form $\tfrac{d}{j}$, but their examples satisfy the restriction up to the endpoint excluded \cite[Theorem~2]{laba-wang}.
We also mention a result due to Bilz, who constructs a “universally bad” compact set of Hausdorff dimension $d$, such that no restriction is possible (except for $p=1$) for a measure with support in it \cite[Corollary 2]{bilz}. 

% We therefore see that there are not many known examples of measures admitting endpoint restriciton, and Theorem \ref{thm:endpoint_implies_unrectif} is imposing strong geometric constraints, saying that any such measure must be highly singular.

We conclude referring the reader to the survey \cite{laba} for an overview on the restriction problem for fractal measures. We just mention here the results on the restriction for general measures due to Mockenhaupt \cite{mockenhaupt}, Mitsis \cite{mitsis} and Bak-Seeger \cite{bak-seeger}, and the proof of sharpness of the previous results by Hambrook-\L{}aba \cite{hambrook-laba}. 
%We remark that all these results assume some decay estimate for $\hat \mu$, while we are only interested in the dimension of $\mu$.

%\subsection*{Acknowledgements} 
\section{Proofs}

\subsection{Notation}
 The Fourier transform of a function $f\in\S(\R^d)$ is defined by
\[
\hat f(\xi):=\int e^{-2\pi i\xi\cdot x}f(x)dx.
\]
 We define the map $T_{x,r}(y)=\tfrac{y-x}{r}$, and we denote by $T_{x,r}\mu$ the pushforward of $\mu$ under $T_{x,r}$, namely $T_{x,r}\mu(A):=\mu(x+rA)$ for every Borel set $A$. Given a measure $\mu$ and a point $x$, the space of \textit{tangent measures}  $\Tan(\mu,x)$, introduced in \cite{preiss}, consists of all the possible limits (in the weak* sense of measures) of $c_iT_{x,r_i}\mu$, for some sequence of positive real numbers $c_i$ and some sequence of radii $r_i\to 0$. 
 We denote by $\H^s$ the $s$-dimensional Hausdorff measure, and by $\Theta^{*s}(\mu,x)$ and $\Theta^s_*(\mu,x)$ the $s$-dimensional upper and lower densities of $\mu$ at the point $x$ \cite[2.10.19]{federer}.
 
 When $s$ is an integer, a measure $\mu$ is $s$-rectifiable of class $C^{1,\alpha}$ if $\mu\ll \H^s$ and if it is supported on a set that can be covered by countably many (rotated) graphs of $C^{1,\alpha}$ maps from $\R^s$ to $\R^{d-s}$. A set $E\subset\R^d$ is $s$-purely unrectifiable if, for every Lipschitz map $\phi:\R^s\to \R^d$, $\H^s(E\cap \phi(\R^s))=0$.

% By duality this is equivalent to the following extension estimate:
% \[
% \|\F^{-1}(g\mu)\|_{L^{p'}(\R^n)}\leq C \|g\|_{L^{q'}(\mu)}\quad\text{for every $g\in C^\infty_c(\R^n)$},
% \]
% where
% \[
% \F^{-1}(g\mu)(x):=\int e^{2\pi i \xi\cdot x}g(\xi)d\mu(\xi).
% \]
\subsection{Preliminary facts}
\begin{lemma}[Necessary conditions and dimension]\label{lem:necessary_cond_dim} 
Let $\mu$ be a Radon measure on $\R^d$ such
that $R_\mu(p\to q)$ holds. Then there exists a constant $M$ depending only on $d,p,q,C$ such that  $\mu(B(x,r))\leq Mr^{{dq}/{p^\prime}}$ for any $x\in\R^n$ and $r>0$ and in particular $\mu\ll \mathcal{H}^{{dq}/{p^\prime}}$.
Moreover, if $0<\Theta^{*s}(\phi,x)<\infty$ on a set of positive $\mu$-measure, then $q\leq \tfrac{s}{d}p^\prime$.
\end{lemma}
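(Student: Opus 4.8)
The plan is to test the hypothesis $R_\mu(p\to q)$ against a single rescaled, modulated Schwartz function adapted to the ball $B(x,r)$ — a Knapp-type example. Fix once and for all a function $\phi\in C_c^\infty(\R^d)$ with $\1_{B(0,1)}\le\phi\le\1_{B(0,2)}$, and set $g:=\check\phi\in\S(\R^d)$, so that $\hat g=\phi$ by Fourier inversion. Given $x\in\R^d$ and $r>0$, I would apply \eqref{eq:restriction} to
\[
f_{x,r}(y):=e^{2\pi i x\cdot y}\,g(ry)\ \in\ \S(\R^d).
\]
The scaling and modulation rules for the Fourier transform give $\widehat{f_{x,r}}(\xi)=r^{-d}\phi\big((\xi-x)/r\big)$; in particular $|\widehat{f_{x,r}}(\xi)|=r^{-d}$ for every $\xi\in B(x,r)$, while $\|f_{x,r}\|_{L^p(\R^d)}=r^{-d/p}\|g\|_{L^p(\R^d)}$.

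Plugging $f_{x,r}$ into \eqref{eq:restriction} and bounding the left-hand side from below by restricting the integration to $B(x,r)$,
\[
r^{-d}\,\mu(B(x,r))^{1/q}\ \le\ \|\widehat{f_{x,r}}\|_{L^q(\mu)}\ \le\ C\,\|f_{x,r}\|_{L^p(\R^d)}\ =\ C\,\|g\|_{L^p(\R^d)}\,r^{-d/p}.
\]
Since $1-\tfrac1p=\tfrac1{p'}$, rearranging yields $\mu(B(x,r))\le M\,r^{dq/p'}$ with $M:=\big(C\|g\|_{L^p(\R^d)}\big)^q$, a constant depending only on $d,p,q,C$ (the bump $\phi$ being fixed in terms of $d$ alone). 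The absolute continuity $\mu\ll\H^{dq/p'}$ then follows from the standard covering argument: if $\H^{dq/p'}(E)=0$, cover $E$ by balls $B(x_i,r_i)$ with $\sum_i r_i^{dq/p'}$ arbitrarily small, so that $\mu(E)\le\sum_i\mu(B(x_i,r_i))\le M\sum_i r_i^{dq/p'}\to 0$.

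For the last assertion, set $\alpha:=dq/p'$ and suppose for contradiction that $\alpha>s$. Then for every $x\in\R^d$ the ball bound gives $\mu(B(x,r))/r^s\le M\,r^{\alpha-s}\to 0$ as $r\to 0$, hence $\Theta^{*s}(\mu,x)=0$ for \emph{every} $x$; this contradicts the hypothesis that $0<\Theta^{*s}(\mu,x)$ on a set of positive $\mu$-measure. Therefore $\alpha\le s$, i.e. $q\le\tfrac{s}{d}p'$.

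As for difficulty, I do not expect a genuine obstacle: this is an elementary scaling test. The only points requiring a little care are the bookkeeping of exponents under the dilation $y\mapsto ry$ combined with the modulation $e^{2\pi i x\cdot y}$, and the (routine but worth spelling out) passage from the uniform ball bound $\mu(B(x,r))\le M r^{\alpha}$ to $\mu\ll\H^{\alpha}$.
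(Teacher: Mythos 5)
Your proof is correct and takes essentially the same route as the paper: both arguments test the restriction inequality against a single modulated, dilated Schwartz function adapted to $B(x,r)$, bound the $L^q(\mu)$ norm from below on that ball, and compare powers of $r$, the only difference being that the paper uses the modulated Gaussian $e^{-\pi t|\cdot|^2+2\pi i\,(\cdot)\cdot x_0}$ where you use a function whose Fourier transform is a bump equal to $1$ on the ball. Your covering argument for $\mu\ll\H^{dq/p'}$ (the paper cites Federer for this) and your density contradiction for the final assertion likewise match the paper's reasoning.
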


\begin{proof}
Since $\mu$ satisfies the restriction inequality with exponents $p$ and $q$ we know that for any $t>0$ we have
\begin{equation}
    \begin{split}
        \int \lvert\widehat{e^{-\pi t \lvert \boldsymbol{\cdot} \rvert^2}}(\zeta-x_0)\rvert^q d\mu(\zeta)=&\int \lvert\widehat{e^{-\pi  t \lvert \boldsymbol{\cdot} \rvert^2+2\pi i \boldsymbol{\cdot}\cdot x_0}}(\zeta)\rvert^q d\mu(\zeta)\\
        \leq& C^q \Big(\int e^{-\pi p t \lvert x \rvert^2}d\mathcal{L}^n(x)\Big)^\frac{q}{p}.
        \nonumber
    \end{split}
\end{equation}
The Fourier transform of the function $e^{-\pi t \lvert x \rvert^2}$ is  $t^{-d/2}e^{-\pi \lvert \zeta \rvert^2/t}$. This implies that
$$\int t^{-dq/2}e^{-\pi q \lvert \zeta-x_0 \rvert^2/t} d\mu(\zeta)\leq C^q \Big(\int e^{-\pi p t \lvert x \rvert^2}d\mathcal{L}^d(x)\Big)^\frac{q}{p}=C^q(pt)^{-\frac{dq}{2p}}.$$
Finally, rearranging the above inequality we conclude that
\begin{equation}
    \begin{split}
e^{-1}\mu(B(x_0,(\pi q)^{-1/2}t^{1/2}))\leq C^qt^{dq/2}(pt)^{-\frac{dq}{2p}}=C^qp^{-\frac{dq}{2p}}t^\frac{dq}{2p^\prime}.
\label{eq:num1}
\end{split}
\end{equation}
The chain of inequalities in \eqref{eq:num1} yields a constant $M=M(d,p,q,C)$ such that $\mu(B(x,r))\leq M r^{{dq}/{p^\prime}}$ for any $x\in \R^d$ and any $r>0$. The fact that $\mu$ is absolutely continuous with respect to $\mathcal{H}^{{dq}/{p^\prime}}$ is an immediate consequence of \cite[2.10.19(1)]{federer}. The last assertion of the theorem follows because if the density assumption holds at $x$ then for an infinitesimal sequence of radii we also have $\mu(B(x,r))\gtrsim r^s$.
\end{proof}

\begin{remark}
Note that Lemma \ref{lem:necessary_cond_dim} implies that if $p=q^\prime$ and $p>1$, then $\mu\ll\mathcal{L}^d$. 
\end{remark}

\begin{lemma}[Necessary conditions and rectifiability]\label{lem:necessary_cond_rectif}
Let $s$ be an integer and $\mu$ be a measure on $\R^d$ which is $s$-rectifiable of class $C^{1,\alpha}$. Suppose that $R_\mu(p\to q)$ holds. Then $q\leq \frac{s}{d+\alpha(d-s)}p'$. 
\end{lemma}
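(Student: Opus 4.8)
The plan is to run a Knapp-type construction tuned to the $C^{1,\alpha}$ regularity of the support. Write $\Sigma$ for the support of $\mu$. Since $\mu\ll\H^s$ and $\mu$ is $s$-rectifiable of class $C^{1,\alpha}$, one first records the standard facts that $\mu=\theta\,\H^s$ on $\Sigma$ for a Borel density $\theta$, with $\theta>0$ and $\Theta^s_*(\mu,\cdot)\in(0,\infty)$ at $\mu$-a.e.\ point, and that at $\mu$-a.e.\ point $\Sigma$ coincides, near that point and up to a $\mu$-negligible fraction, with a single $C^{1,\alpha}$ graph of the covering. I would fix such a base point $x_0$, and after translating and rotating assume $x_0=0$ with tangent plane $\R^s\times\{0\}$, so that near $x_0$ the set $\Sigma$ is the graph $\{(y,\psi(y)):y\in\R^s\}$ of a $C^{1,\alpha}$ map $\psi\colon\R^s\to\R^{d-s}$ with $\psi(0)=0$, $\nabla\psi(0)=0$. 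Because $\nabla\psi$ is $\alpha$-Hölder one has $|\psi(y)|\le C|y|^{1+\alpha}$ for small $|y|$, so for all small $\delta>0$ the cap $\Sigma\cap B(0,\delta/2)$ is contained in the (rotated) box
\[
Q_\delta:=[-\delta,\delta]^s\times[-C\delta^{1+\alpha},C\delta^{1+\alpha}]^{d-s},
\]
while the positive lower density at $x_0$ gives $\mu(Q_\delta)\ge\mu(B(0,\delta/2))\gtrsim\delta^s$ for all small $\delta$.

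Next I would test the restriction inequality \eqref{eq:restriction} against a function whose Fourier transform is a smooth bump adapted to a dilate of $Q_\delta$. Set $\sigma:=s+(1+\alpha)(d-s)=d+\alpha(d-s)$, let $A_\delta$ be the linear map (a rotation composed with an anisotropic dilation) carrying $2Q_\delta$ onto $[-1,1]^d$, so that $|\det A_\delta|\sim\delta^{-\sigma}$ and $A_\delta(Q_\delta)\subset[-\tfrac12,\tfrac12]^d$, fix $\eta\in\mathcal S(\R^d)$ with $\eta\ge1$ on $[-\tfrac12,\tfrac12]^d$, and let $f\in\mathcal S(\R^d)$ be defined by $\hat f(\xi):=\eta(A_\delta\xi)$. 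A routine change of variables then gives $\|f\|_{L^p(\R^d)}=|\det A_\delta|^{-1/p'}\|\check\eta\|_{L^p(\R^d)}\sim\delta^{\sigma/p'}$ (here $\check\eta$ denotes the inverse Fourier transform of $\eta$), whereas $\hat f\ge1$ on $Q_\delta$ forces $\|\hat f\|_{L^q(\mu)}\ge\mu(Q_\delta)^{1/q}\gtrsim\delta^{s/q}$. Inserting these two bounds into $\|\hat f\|_{L^q(\mu)}\le C\|f\|_{L^p(\R^d)}$ yields $\delta^{s/q}\lesssim\delta^{\sigma/p'}$ uniformly as $\delta\to0$, hence $\tfrac sq\ge\tfrac{\sigma}{p'}$, that is $q\le\tfrac{s}{d+\alpha(d-s)}p'$.

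The two displayed estimates are the usual uncertainty-principle bookkeeping, so the main obstacle is really the geometric preparation: pinning down a base point at which $\Sigma$ is, locally and up to a $\mu$-null fraction, a single $C^{1,\alpha}$ graph with density bounded below — this is exactly where the $C^{1,\alpha}$ hypothesis is used, through the $\delta^{1+\alpha}$ thickness of the cap in the $d-s$ normal directions — together with carefully tracking the anisotropic rescaling $A_\delta$ that relates $\|f\|_{L^p}$, $|\det A_\delta|$ and $\mu(Q_\delta)$. The Schwartz tails of $f$ outside the dual box and the Jacobian of the graph map (which tends to $1$ at $x_0$) only contribute harmless constants. As a consistency check, $\alpha=0$ recovers the bound $q\le\tfrac sd p'$ of Lemma \ref{lem:necessary_cond_dim}, and $s=d$ recovers $q\le p'$.
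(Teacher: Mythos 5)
Your proof is correct, and it is the same Knapp-type example as in the paper, but run on the ``primal'' side rather than the dual one: the paper passes to the equivalent extension estimate $\|\widehat{g\mu}\|_{L^{p'}(\R^d)}\leq C\|g\|_{L^{q'}(\mu)}$, tests it with a physical-side bump $g$ on the cap of size $\delta$, and needs the pointwise lower bound $|\widehat{g\mu}(x)|\gtrsim\delta^s$ on the dual tube $B(V^\perp,\tfrac{c_0}{\delta^{1+\alpha}})\cap B(0,\tfrac{c_0}{\delta})$ (near-constancy of the phase), whereas you test \eqref{eq:restriction} directly with an $f$ whose Fourier transform is a bump adapted to the slab $Q_\delta$ of dimensions $\delta\times\delta^{1+\alpha}$ containing the cap, so the only analytic inputs are $\hat f\geq 1$ on a set of $\mu$-measure $\gtrsim\delta^s$ and the exact scaling $\|f\|_{L^p}\sim|\det A_\delta|^{-1/p'}\sim\delta^{(d+\alpha(d-s))/p'}$. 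This buys you a slightly more elementary computation (no duality step, no oscillatory lower bound, no tail estimates), at the cost of nothing: the geometric input is identical, namely that at $\mu$-a.e.\ point the measure in $B(x_0,\delta)$ concentrates, up to a vanishing fraction, within distance $C\delta^{1+\alpha}$ of the tangent $s$-plane — which the paper imports from \cite{dni} and you re-derive from the Besicovitch density theorem applied to the graph covering together with the H\"older bound on $\nabla\psi$.

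One phrase to tighten: the inclusion ``$\Sigma\cap B(0,\delta/2)\subset Q_\delta$'' is not literally available, since the support of $\mu$ need not lie on the single graph through $x_0$; what your density-point argument gives is $\mu(B(0,\delta/2)\setminus Q_\delta)=o(\mu(B(0,\delta/2)))$, hence $\mu(Q_\delta)\geq\tfrac12\mu(B(0,\delta/2))\gtrsim\delta^s$ for small $\delta$, which is all your estimate uses. With that one-line correction the argument is complete.
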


\begin{proof}
We sketch the proof, since it is a slight variation of the so-called Knapp example, which exploits the fact that the restriction estimate \eqref{eq:restriction} is equivalent to the following extension estimate:
\[
\|\widehat{g\mu}\|_{L^{p'}(\R^d)}\leq C\|g\|_{L^{q'}(\mu)}\qquad\text{for every $g\in C^\infty_c(\R^d)$},
\]
where
\begin{equation}\label{eq:extension}
\widehat{g\mu}(x):=\int e^{-2\pi i x\cdot \xi}g(\xi)d\mu(\xi).
\end{equation}
We fix a non negative bump function $\phi$ which is $1$ in $B(0,\tfrac12)$ and with support in $B(0,1)$. Given $\delta>0$ and $\xi_0\in \R^d$ such that $0<\Theta^s_*(\mu,\xi_0)\leq \Theta^{*s}(\mu,\xi_0)<\infty$, we define $g(\xi):=\phi(\tfrac{\xi-\xi_0}{\delta})$. Using that $\mu(B(\xi_0,\delta))\approx \delta^s$ for small $\delta$, it is readily seen that $\|g\|_{L^{q'}(\mu)}\approx\delta^{s/q'}$ for small $\delta$. On the other hand, since $\mu$ is $s$-rectifiable of class $C^{1,\alpha}$, by Proposition 1.2 and Remark 1.4(iii) in \cite{dni} we know that in the ball $B(\xi_0,\delta)$ the measure is mostly concentrated on a $c\delta^{1+\alpha}$-neighbourhood of an $s$-plane for some constant $c$, that is
\begin{equation}\label{eq:cylinder}
\frac{\mu(B(\xi_0,\delta)\setminus B(\xi_0+V,c\delta^{1+\alpha}))}{\mu(B(\xi_0,\delta))}\to 0 \qquad\text{as $\delta\to 0$},
\end{equation}
where $V$ is a linear $s$-subspace of $\R^d$.
Given $\xi\in B(\xi_0+V,c\delta^{1+\alpha})\cap B(\xi_0,\delta)$, we have that $x\cdot \xi$ is small (so that $e^{-2\pi i x\cdot \xi}$ is of order $1$) for every $x$ in a dual cylinder $B(V^\perp, \tfrac{c_0}{\delta^{1+\alpha}})\cap B(0,\tfrac{c_0}{\delta})$, for some sufficiently small constant $c_0$. Hence, using also \eqref{eq:cylinder}, from \eqref{eq:extension} we see that for small $\delta$
\begin{equation}\label{eq:lower_bound_tubes}
|\widehat{g\mu}(x)|\gtrsim \delta^k\qquad\text{for all $x\in B(V^\perp, \tfrac{c_0}{\delta^{1+\alpha}})\cap B(0,\tfrac{c_0}{\delta})$}.
\end{equation}
From this we obtain $\|\widehat{g\mu}\|_{L^{p'}(\R^n)}\gtrsim \delta^s \left(\tfrac{1}{\delta^{(1+\alpha)(d-s)+s}}\right)^{1/p'}$. The restriction estimate thus implies that for small $\delta$
\[
\delta^s\left(\tfrac{1}{\delta^{(1+\alpha)(d-s)+s}}\right)^{1/p'}\lesssim \delta^{s/q'}
\]
which after a quick computation forces $q\leq \frac{s}{d+\alpha(n-s)}p'$.
\end{proof}

\begin{remark}\label{rmk:second_condition}
Assuming also positive lower density of $\mu$, it is possible to prove a second necessary condition, namely $p\leq \tfrac{2d}{2d-s}$. This can be done putting together $N\approx {\delta^{-s}}$ Knapp examples with disjoint supports, and random signs $\eps_i\in\{-1,+1\}$, that is testing \eqref{eq:extension} with 
\[
g(\xi):=\sum_{i=1}^N \eps_i g_i(\xi),\qquad g_i(\xi):=\phi\left(\frac{\xi-\xi_i}{\delta}\right).
\]
In this case $\|g\|_{L^{q'}(\mu)}\approx 1$, while Khintchine's inequality yields the existence of a choice of $\eps_i$ such that
\[
\|\widehat{g\mu}\|_{L^{p'}(\R^d)}=\Big\|\sum_{i=1}^N\eps_i\widehat{g_i\mu}\Big\|_{L^{p'}(\R^d)}\gtrsim \Big\|\Big(\sum_{i=1}^N |\widehat{g_i\mu}|^2\Big)^{\frac12}\Big\|_{L^{p'}(\R^d)}.
\]
Similarly to \eqref{eq:lower_bound_tubes} we have that $\widehat{g_i\mu}(x)\gtrsim \delta^s$ for all $x\in B(0,\tfrac{c_0}{\delta})$, hence
\[
\Big\|\Big(\sum_{i=1}^N |\widehat{g_i\mu}|^2\Big)^{\frac12}\Big\|_{L^{p'}(\R^d)}\gtrsim \|(N\delta^{2s}\1_{B(0,\frac{c_0}{\delta})})^\frac12 \|_{L^{p'}(\R^d)}\approx N^\frac12 \delta^s \frac{1}{\delta^{d/p'}}\approx \delta^{\frac{s}{2}-\frac{d}{p'}}.
\]
The restriction inequality then implies that $1\gtrsim \delta^{\frac{s}{2}-\frac{d}{p'}}$ for small $\delta$, which yields the conclusion.
\end{remark}
\begin{remark}
In the case $s=d-1$ and $\alpha=1$ (that is when $\mu$ is $(d-1)$-rectifiable of class $C^{1,1}$) the conditions given by Lemma \ref{lem:necessary_cond_rectif} and  Remark \ref{rmk:second_condition} coincide (except for the endpoint) with the optimal range conjectured for the restriction on the sphere, one of the most important cases \cite{tao}.
\end{remark}

\begin{lemma}[Stability of restriction under blowup]\label{lem:restriction_blowup}
Suppose $\mu$ is a Radon measure for which $R(p\to sp^\prime/d)$ holds for some $s\in[0,d]$. If there exists an $x\in\R^d$ and an infinitesimal sequence $r_i$ such that $r_i^{-s}T_{x,r_i}\mu\rightharpoonup \nu$, then $R(p\to sp^\prime/d)$ holds for $\nu$ with the same constant.
\end{lemma}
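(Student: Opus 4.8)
The plan is to transfer the restriction inequality from $\mu$ to $\nu$ by exploiting the scaling behavior of the Fourier transform under dilations, together with the weak* convergence $r_i^{-s}T_{x,r_i}\mu \rightharpoonup \nu$. First I would record how the restriction inequality behaves under the maps $T_{x,r}$: if we set $g(y) = f(x + ry)$ (so $\hat g(\zeta) = r^{-d} e^{2\pi i x\cdot\zeta/r}\hat f(\zeta/r)$), then a change of variables in both sides of \eqref{eq:restriction} shows that $R_\mu(p\to q)$ with constant $C$ is equivalent to the scaled inequality
\[
\|\hat f\|_{L^q(T_{x,r}\mu)} \le C\, r^{d/q - d/p'}\,\|f\|_{L^p(\R^d)},
\]
and since $q = sp'/d$ we have $d/q - d/p' = (d-s)/q \cdot \frac{d}{?}$—more cleanly, $r^{d/q}\cdot(r^{-s})^{1/q} = r^{d/q - s/q} = r^{(d-s)/q}$, so one checks that $\|\hat f\|_{L^q(r^{-s}T_{x,r}\mu)} \le C\|f\|_{L^p(\R^d)}$ \emph{with no remaining power of $r$}, precisely because of the endpoint relation $q = sp'/d$. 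So the family $\mu_i := r_i^{-s}T_{x,r_i}\mu$ satisfies $R_{\mu_i}(p\to q)$ uniformly with the same constant $C$.

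Next I would pass to the limit $i\to\infty$. Fix $f\in\S(\R^d)$; then $\hat f$ is continuous and bounded (indeed Schwartz), so $|\hat f|^q$ is continuous and bounded. The issue is that weak* convergence of Radon measures gives $\int \varphi\, d\mu_i \to \int\varphi\,d\nu$ only for $\varphi\in C_c(\R^d)$, not for merely bounded continuous $\varphi$. I would handle this with a standard truncation: for a cutoff $\chi_R\in C_c$ with $\1_{B(0,R)}\le\chi_R\le\1_{B(0,2R)}$ we get
\[
\int \chi_R|\hat f|^q\, d\nu = \lim_{i}\int\chi_R|\hat f|^q\,d\mu_i \le \liminf_i \int |\hat f|^q\, d\mu_i \le C^q\|f\|_{L^p}^q,
\]
uniformly in $R$, and then let $R\to\infty$ using monotone convergence to conclude $\|\hat f\|_{L^q(\nu)}\le C\|f\|_{L^p(\R^d)}$. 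This gives the restriction inequality for $\nu$ with the same constant, first for $f\in\S(\R^d)$, which is exactly the required statement.

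The main obstacle is the bookkeeping in the first step: one must verify carefully that the power of $r$ coming from the dilation in the extension/restriction inequality is exactly cancelled by the normalization factor $r^{-s}$, and this cancellation is what singles out the endpoint exponent $q = sp'/d$ — it would fail for any other $q$. A minor secondary point is ensuring the truncation argument is legitimate even when $\nu$ is not a priori known to be finite (it is locally finite as a weak* limit of Radon measures, which is all that is needed for the monotone convergence step, since the bound is uniform in $R$). No compactness of supports or tightness is required, so there is no difficulty there.
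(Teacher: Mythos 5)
Your strategy is the same as the paper's: show that the rescaled measures $\mu_i:=r_i^{-s}T_{x,r_i}\mu$ satisfy $R(p\to q)$ uniformly with the same constant because the powers of $r_i$ cancel exactly at the endpoint $q=\tfrac sd p'$, and then pass to the limit using weak* convergence. Your second step is correct and in fact more carefully justified than in the paper: the one-sided truncation argument with $\chi_R$ and monotone convergence is all that is needed, and it avoids any tightness discussion.

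The problem is the first step, which you yourself identify as the crux, and which is wrong as written. With the paper's convention $T_{x,r}\mu(A)=\mu(x+rA)$, the correct scaled inequality is $\|\hat f\|_{L^q(T_{x,r}\mu)}\leq C\,r^{d/p'}\|f\|_{L^p(\R^d)}$, not $C\,r^{d/q-d/p'}\|f\|_{L^p(\R^d)}$: one tests \eqref{eq:restriction} with $h(z):=r^d e^{2\pi i x\cdot z}f(rz)$, for which $\hat h(y)=\hat f\bigl(\tfrac{y-x}{r}\bigr)$ and $\|h\|_{L^p}=r^{d/p'}\|f\|_{L^p}$. (Your choice $g(y)=f(x+ry)$ gives $|\hat g(\zeta)|=r^{-d}|\hat f(\zeta/r)|$, so it sees $T_{0,r}\mu$ rather than $T_{x,r}\mu$; the translation must enter as a modulation of the test function, though this does not affect the norms, and even with your $g$ the exponent comes out as $d/p'$.) With the correct exponent, the normalization contributes $r^{-s/q}=r^{-d/p'}$ precisely at the endpoint, and the cancellation holds for every $s$. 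With the exponent you wrote, combining $r^{d/q-d/p'}$ with $r^{-s/q}$ leaves $r^{d/q-2d/p'}$, which vanishes only when $s=d/2$, so the uniform bound you assert does not follow from your own computation; likewise the identity $r^{d/q}\cdot(r^{-s})^{1/q}=r^{(d-s)/q}$ does not exhibit any cancellation (it is a nonzero power unless $s=d$), and the stray ``?'' shows the verification was left incomplete. The statement you ultimately claim (uniform $R_{\mu_i}(p\to q)$ with constant $C$) is true, but the decisive computation has to be redone as above.
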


\begin{proof}
Let us put $q=\tfrac{s}{d}p'$. In order to simplify the notations, in the following we let $\mu_i:=r_i^{-s}T_{x,r_i}\mu$. For any 
$f\in \mathcal{S}(\R^d)$, using the scaling properties of the Fourier transform (indicated here also with $\mathcal{F}$) and a change of variables, we have
%, thanks to Lemma \ref{lem:necessary_cond_dim} we have that $f\in L^p(\mu_i)$ for any $p$ and any $i\in\N$. In addition:
\begin{equation}
\begin{split}
    \|\hat f\|_{L^q(\mu_i)}^q=\int \lvert \hat f(y)\rvert^{q} d\mu_i(y)&= r_i^{-s}\int \lvert \hat f(y)\rvert^{q}dT_{x,r_i}\mu(y)=r_i^{-s}\int \Big\lvert \hat f\Big(\frac{y-x}{r_i}\Big)\Big\rvert^{q} d\mu(y)\\
    & =r_i^{-s}\int \left| r_i^d\mathcal{F}\left( f(r_i z)e^{2\pi i x\cdot z}\right)(y)\right|^{q} d\mu(y)\\
    & \leq C^{q}r_i^{-s+dq}\left(\int \left| f(r_i z)e^{2\pi i x\cdot z}\right|^{p} dz\right)^{\frac{q}{p}}\\
    & =C^{q}r_i^{-s+dq-d\frac{q}{p}}\| f\|_{L^{p}(\R^d)}^{q}=C^{q}\| f\|_{L^{p}(\R^d)}^{q}.
    \nonumber
\end{split}
\end{equation}
%showing that the $L^p(\mu_i)$-norm of $f$ is uniformly bounded in $i$.
Thanks to the regularity of $f$, from the definition of $\nu$ we deduce that
$\int \lvert \hat f(y)\rvert^p d\mu_i(y)$ converges to $\int \lvert \hat f(y)\rvert^p d\nu(y)$, concluding the proof.
\end{proof}

Next we see that, for measures which are the tensor product of Lebesgue with some other measure, the only possibility for the restriction is $p=q'$, as in the case of the Lebesgue measure itself.
\begin{lemma}[Restriction for tensor measures]\label{lem:restriction_tensor}
Let us consider a measure of the form $\eta=\theta\otimes \H^k\llcorner V$, where $V$ is a $k$-dimensional subspace of $\R^d$, and $\theta$ is a measure on $V^\perp$. Suppose that $R_\mu(p\to q)$ holds for some $p,q\in [1,\infty]$. Then either $k=0$ or $p=q'$. In any case $\theta$ admits, within $V^\perp$, the same restriction estimate $R_\theta(p\to q)$.
\end{lemma}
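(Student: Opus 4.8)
The plan is to exploit the product structure of $\eta$ by testing the restriction inequality on functions that are also of product form, thereby factoring out the Lebesgue directions. Write $\R^d = V^\perp \times V$ and coordinates $x = (x', x'')$ accordingly, with $k = \dim V$. For a Schwartz function of the form $f(x', x'') = f_1(x') f_2(x'')$ we have $\hat f(\xi', \xi'') = \hat f_1(\xi') \hat f_2(\xi'')$, and since $\eta = \theta \otimes \H^k\llcorner V$ the left-hand side of \eqref{eq:restriction} factors as $\|\hat f_1\|_{L^q(\theta)} \cdot \|\hat f_2\|_{L^q(\H^k\llcorner V)}$, while the right-hand side factors as $C \|f_1\|_{L^p(V^\perp)} \|f_2\|_{L^p(V)}$. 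Thus for every such pair,
\[
\|\hat f_1\|_{L^q(\theta)} \, \|\hat f_2\|_{L^q(\R^k)} \le C \, \|f_1\|_{L^p(V^\perp)} \, \|f_2\|_{L^p(V)}.
\]

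First I would derive the constraint $p = q'$ when $k \ge 1$. Fix any $f_1$ with $0 < \|\hat f_1\|_{L^q(\theta)} < \infty$ and $\|f_1\|_{L^p} < \infty$ (such $f_1$ exists, e.g. a Gaussian, using that $\theta$ is a nonzero Radon measure — if $\theta$ is the zero measure the statement is vacuous). Dividing through, we obtain $\|\hat f_2\|_{L^q(\R^k)} \le C' \|f_2\|_{L^p(\R^k)}$ for all Schwartz $f_2$ on $\R^k$; that is, Lebesgue measure on $\R^k$ satisfies $R_{\mathcal L^k}(p \to q)$. But Lebesgue measure on $\R^k$ has upper and lower $k$-densities positive and finite everywhere, so Lemma \ref{lem:necessary_cond_dim} forces $q \le \tfrac{k}{k} p' = p'$. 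For the reverse inequality I would exploit scaling: replacing $f_2$ by $f_2(\lambda \, \cdot\,)$ scales $\|\hat f_2\|_{L^q(\R^k)}$ by $\lambda^{-k - k/q}$ (from the dilation law of the Fourier transform on $\R^k$ together with the change of variables in the $L^q$ integral) and scales $\|f_2\|_{L^p(\R^k)}$ by $\lambda^{-k/p}$; letting $\lambda \to 0$ and $\lambda \to \infty$ forces $-k - \tfrac{k}{q} \ge -\tfrac{k}{p}$ and $\le$, hence the homogeneity must match: $k + \tfrac{k}{q} = \tfrac{k}{p}$. Since $k \ge 1$ this is impossible unless... — here I should be more careful: the correct Fourier scaling on $\R^k$ gives $\widehat{f_2(\lambda\cdot)}(\xi) = \lambda^{-k}\hat f_2(\xi/\lambda)$, so $\|\widehat{f_2(\lambda\cdot)}\|_{L^q} = \lambda^{-k}\lambda^{k/q}\|\hat f_2\|_{L^q} = \lambda^{-k/q'}\|\hat f_2\|_{L^q}$, while $\|f_2(\lambda\cdot)\|_{L^p} = \lambda^{-k/p}\|f_2\|_{L^p}$; matching exponents gives $\tfrac{k}{q'} = \tfrac{k}{p}$, i.e. $p = q'$, which is exactly the claim. (If instead $q \le p'$ strictly, the inequality $\lambda^{-k/q'}A \le C'\lambda^{-k/p}B$ would fail as $\lambda\to 0$ since $-k/q' < -k/p$; letting $\lambda \to \infty$ kills it when $-k/q' > -k/p$. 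So equality is forced.) When $k = 0$ the factor $\H^0\llcorner V$ is a point mass at the origin, the Lebesgue directions are absent, and no constraint beyond the original one arises.

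Finally, for the last assertion — that $\theta$ satisfies $R_\theta(p \to q)$ within $V^\perp$ — I would run the same factorization in the other direction: fix any nonzero $f_2 \in \mathcal S(\R^k)$, so that $0 < \|\hat f_2\|_{L^q(\R^k)} < \infty$ and $0 < \|f_2\|_{L^p(\R^k)} < \infty$, divide the displayed inequality through by $\|\hat f_2\|_{L^q(\R^k)}$, and absorb the ratio $\tfrac{\|f_2\|_{L^p}}{\|\hat f_2\|_{L^q}}$ into the constant; this yields $\|\hat f_1\|_{L^q(\theta)} \le C'' \|f_1\|_{L^p(V^\perp)}$ for every $f_1 \in \mathcal S(V^\perp)$, which is precisely $R_\theta(p \to q)$. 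I expect the only genuinely delicate point to be the density of product functions — one must confirm that testing on (finite sums of, or a single) tensor functions $f_1 \otimes f_2$ suffices, which here it does because we only need the inequality for \emph{some} admissible choices to extract each constraint, and the final factorized inequality for $\theta$ (resp.\ $\mathcal L^k$) is a genuine inequality over \emph{all} Schwartz functions in the remaining variables. The scaling argument pinning down $p = q'$ is the conceptual heart, and its only subtlety is getting the Fourier dilation exponent right, as traced above.
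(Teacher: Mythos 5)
Your proof is correct and follows essentially the same route as the paper: test the restriction inequality on tensor Schwartz functions, factor the two sides, and fix one factor to isolate an inequality in the other variable. Your explicit dilation computation on $\R^k$ is precisely the content behind the paper's appeal to the ``absence of restriction estimates from $\H^k\llcorner V$ to itself'' unless $p=q'$, so the two arguments coincide in substance.
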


\begin{proof}
We consider Schwartz functions of the form $f(x)=f_1(x_1)f_2(x_2)$ where $x=x_1+x_2$, $x_1\in V$, $x_2\in V^\perp$, and $f_1\in\S(V)$, $f_2\in\S(V^\perp)$. Then $\hat f(x)=\hat f_1(x_1)\hat f_2(x_2)$. Using that $\|f_1f_2\|_{L^p(\nu_1\otimes\nu_2)}=\|f_1\|_{L^p(\nu_1)}\|f_2\|_{L^p(\nu_2)}$, and assuming that $R_\mu(p\to q)$ holds we obtain 
\begin{align*}
\|\hat f_1\|_{L^q(\H^k\llcorner V)}\|\hat f_2\|_{L^q(\theta)}&=\|\hat f_1 \hat f_2\|_{L^q(\mu)}=\|\hat f\|_{L^q(\mu)}\\
&\leq C \|f\|_{L^p(\R^d)}=C\|f_1\|_{L^p(\H^k\llcorner V)} \|f_2\|_{L^p(\H^{d-k}\llcorner V^\perp)}
\end{align*}
which can be rewritten as
\begin{equation}\label{eq:fraction}
\|\hat f_2\|_{L^q(\theta)}\leq C\frac{\|f_1\|_{L^p(\H^k\llcorner V)}}{\|\hat f_1\|_{L^q(\H^k\llcorner V)}} \|f_2\|_{L^p(\H^{d-k}\llcorner V^\perp)}.
\end{equation}
If $q\neq p'$ and $k\neq 0$, thanks to the absence of restriction estimates from the measure $\H^k\llcorner V$ to itself we reach a contradiction: we can fix any $f_2$ for which both norms in the inequality are nonzero (for instance a Gaussian) and then choose $f_1$ that makes the fraction sufficiently small, thus contradicting the inequality. This forces either $k=0$ or $p=q'$. Once the fraction in \eqref{eq:fraction} is bounded, it is clear that $\theta$ (as a measure on $V^\perp$) admits the same restriction estimate $R_\theta(p\to q)$.
\end{proof}

\begin{remark}
As a consequence of Lemma \ref{lem:restriction_blowup} and Lemma \ref{lem:restriction_tensor} we immediately obtain a weaker version of Theorem \ref{thm:endpoint_implies_unrectif}: if $\mu$ is $s$-dimensional, $s<d$, and admits restriction for the endpoint $q=\tfrac{s}{d}p'$, $p>1$, then $\mu$ is $s$-purely unrectifiable. Indeed, if this were not the case, then we could find a flat tangent measure, that is a point $x$ and a sequence $r_i^{-s}T_{x,r_i}\mu$ converging to a measure $\nu$ which is a nontrivial multiple of $\H^s$ restricted to an $s$-plane. By Lemma \ref{lem:restriction_blowup}, in the endpoint case the restriction passes to $\nu$. However we can now apply Lemma \ref{lem:restriction_tensor} with $\theta=\delta_0$: either $p=q'$ (and then $s=d$) or $R_\theta(p\to q)$ holds for $\theta=\delta_0$ on a nontrivial subspace, which forces $p=1$.
\end{remark}

\subsection{Decomposability bundle}
In the proof of Theorem \ref{thm:endpoint_implies_unrectif} we will use the \textit{decomposability bundle} $V(\mu,x)$, defined for every Radon measure $\mu$ in $\R^d$ and introduced in \cite{alberti-marchese}. $V(\mu,x)$ is  a linear subspace of $\R^d$ defined for $\mu$-a.e. $x$, and contains all the directions along which $\mu$ is decomposable, near $x$, in $1$-rectifiable measures. Originally introduced to study the directions of differentiability of Lipschitz functions, the decomposability bundle is characterized by the following property\footnote{The referenced result is stated in terms of boundaryless $1$-currents but, as observed in \cite{alberti-marchese}, it is equivalent to consider measures $\sigma$ with values in $\R^n$, for which the boundaryless requirement becomes $\div \sigma=0$.} \cite[Theorem 6.4]{alberti-marchese}: for $\mu$-a.e. $x\in\R^d$, $v$ belongs to $V(\mu,x)$ if and only if there exists an $\R^d$-valued measure $\sigma$ such that $\div \sigma=0$ in the distributional sense and
\begin{equation}\label{eq:decomp_bundle_def}
\lim_{r\to 0}\frac{|\sigma-v\mu|(B(x,r))}{\mu(B(x,r))}=0.
\end{equation}
We will use that $V(\mu,x)=\{0\}$ for $\mu$-a.e. $x$ if and only if $\mu$ is supported on a $1$-purely unrectifiable set \cite[Proposition~2.9(iv)]{alberti-marchese}.

The proof of Theorem \ref{thm:endpoint_implies_unrectif} relies on the following proposition, connecting the decomposability bundle $V(\mu,x)$ to the structure of tangent measures of $\mu$ at $x$. Up to the author's knowledge this result is not present in the literature, even though it is well-known within the community, see for instance the strategy proposed in \cite[p. 643]{GdPR}. 

\begin{proposition}[Tangent measures and decomposability bundle]\label{prop:tangent_bundle}
Let $\mu$ be a Radon measure on $\R^d$. Then for $\mu$-a.e. $x\in\R^d$, any tangent measure $\eta\in \Tan(\mu,x)$ is $V(\mu,x)$-invariant, namely there exists a Radon measure $\theta$ on $V(\mu,x)^\perp$ such that $\eta=\theta\otimes \H^k\llcorner V(\mu,x)$, where $k=\dim V(\mu,x)$.
%\[
%\nu=\int \H^k\llcorner (y+V(\mu,x))d\theta(y),\qquad k=\dim V(\mu,x).
%\]
\end{proposition}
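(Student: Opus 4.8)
The plan is to reduce the statement to the single distributional identity $\partial_v\eta=0$ for every direction $v\in V(\mu,x)$, and then to invoke an elementary structure lemma for translation–invariant Radon measures. Concretely, I would fix a point $x$ in the full‑measure set on which the characterization \eqref{eq:decomp_bundle_def} of \cite[Theorem 6.4]{alberti-marchese} holds for every $v$, fix a tangent measure $\eta=\lim_i\mu_i$ with $\mu_i:=c_i\,T_{x,r_i}\mu$ and $r_i\to0$, and fix $v\in V(\mu,x)$. By the cited characterization there is an $\R^d$-valued measure $\sigma$ with $\div \sigma=0$ and $|\sigma-v\mu|(B(x,r))/\mu(B(x,r))\to0$ as $r\to0$. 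I then rescale $\sigma$ along the very same sequence, setting $\sigma_i:=c_i\,T_{x,r_i}\sigma$ (componentwise pushforward). A one–line change of variables in the definition of the distributional divergence, using that $T_{x,r_i}$ is affine with differential $r_i^{-1}\mathrm{Id}$, shows that each $\sigma_i$ is again divergence‑free.

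The core of the argument is the passage to the limit. Since $T_{x,r_i}$ is a bijection, $|\sigma_i-v\mu_i|=c_i\,T_{x,r_i}|\sigma-v\mu|$, so for every radius $R$ with $\eta(\partial B(0,R))=0$ one has
\[
|\sigma_i-v\mu_i|(B(0,R))=c_i\,|\sigma-v\mu|(B(x,r_iR))=\frac{|\sigma-v\mu|(B(x,r_iR))}{\mu(B(x,r_iR))}\,\mu_i(B(0,R)),
\]
and the right–hand side tends to $0$, because $r_iR\to0$ while $\mu_i(B(0,R))\to\eta(B(0,R))<\infty$. As such radii $R$ are unbounded, this forces $\sigma_i-v\mu_i\rightharpoonup0$, hence $\sigma_i=v\mu_i+(\sigma_i-v\mu_i)\rightharpoonup v\eta$; since being divergence‑free is closed under weak‑$*$ convergence of measures (the $\sigma_i$ having uniformly locally bounded variation, dominated by $|\sigma_i-v\mu_i|+|v|\mu_i$), we get $\div (v\eta)=0$, that is $\partial_v\eta=0$, which is exactly the statement that $\eta$ is invariant under translation by $tv$ for every $t\in\R$. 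Running this for every $v\in V(\mu,x)$ shows that the fixed tangent measure $\eta$ is invariant under the whole group of translations by $V:=V(\mu,x)$.

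It then remains to prove the elementary fact that a Radon measure $\eta$ on $\R^d$ invariant under translations by a $k$-dimensional subspace $V$ is of the form $\theta\otimes\H^k\llcorner V$ for some Radon measure $\theta$ on $V^\perp$. Writing $\R^d=V^\perp\times V$ and setting $\theta(A):=\eta(A\times Q)$ for Borel $A\subseteq V^\perp$, where $Q\subseteq V$ is a fixed unit cube (so $\theta$ is locally finite), $V$-invariance gives $\eta(A\times R)=\theta(A)\,\H^k(R)$ first for cubes $R\subseteq V$ of side $1/n$ — tile $Q$ by $n^k$ translates of $R$ — then for all boxes $R$ by a routine approximation; since the sets $A\times R$ form a $\pi$-system generating the Borel $\sigma$-algebra of $\R^d$, we conclude $\eta=\theta\otimes\H^k\llcorner V$, the case $k=0$ being trivial.

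I expect the only genuinely delicate point to be the passage to the limit: the difficulty is to control the total variation $|\sigma-v\mu|$ on the shrinking balls $B(x,r_iR)$ against the \emph{arbitrary} normalizing constants $c_i$ of the blow-up. This is exactly what the displayed factorization achieves — rewriting the quantity as the density ratio appearing in \eqref{eq:decomp_bundle_def} times the $\mu_i$-mass of the ball, the former vanishes since $r_iR\to0$ and the latter stays bounded thanks to $\mu_i\rightharpoonup\eta$. Everything else (the scaling computations and the final disintegration) is routine.
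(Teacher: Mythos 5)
Your proof is correct and follows essentially the same route as the paper's: rescale the divergence-free measure $\sigma$ along the blow-up sequence, use the density condition \eqref{eq:decomp_bundle_def} to show $c_iT_{x,r_i}\sigma\rightharpoonup v\eta$, pass the divergence constraint to the limit to get $\div(v\eta)=0$ and hence $V(\mu,x)$-invariance of $\eta$. The only (minor) difference is at the last step, where the paper invokes disintegration and Haar's theorem while you prove the product structure $\eta=\theta\otimes\H^k\llcorner V$ directly by a tiling argument; your factorization of $|\sigma_i-v\mu_i|(B(0,R))$ as the density ratio times $\mu_i(B(0,R))$ is exactly the paper's key estimate, written slightly more carefully.
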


\begin{proof}
Let us consider  a point $x$ in the support of $\mu$, and a tangent measure $\eta\in\Tan(\mu,x)$ given by
\[
\eta=\lim_{i\to\infty}c_i T_{x,r_i}\mu.
\]
Let us take $v\in V(\mu,x)$ and $\sigma$ an $\R^d$-valued measure with $\div \sigma=0$ for which \eqref{eq:decomp_bundle_def} holds. 
Then
\begin{align*}
0 =\lim_{i\to\infty}\frac{|\sigma-v\mu|(B(x,\rho r_i))}{\mu(B(x,\rho r_i))}& =\lim_{i\to\infty}\frac{|c_i T_{x,r_i}\sigma-v c_iT_{x,r_i}\mu|(B(x,\rho))}{c_iT_{x,r_i}\mu(B(x,\rho))}\\
&\geq\limsup_{i\to\infty} \frac{|c_i T_{x,r_i}\sigma-v c_iT_{x,r_i}\mu|(B(x,\rho))}{\eta(B(x,\rho))}.
\end{align*}
We know that $vc_iT_{x,r_i}\mu$ converges weakly to $v\eta$, thus from the above computation we infer that the same is true for $c_iT_{x,r_i}\sigma$. Since the divergence constraint is invariant under scalings and passes to the limit we obtain $\div(v\eta)=0$. From the constancy of $v$ we infer that $\eta$ is invariant in direction $v$, namely that $T_{v,1}\eta=\eta$. As a consequence, $T_{v,1}\eta=\eta$ for every $v\in V(\mu,x)$. By disintegration and Haar's theorem this yields the existence of a measure $\theta$ on $V(\mu,x)^\perp$ such that $\eta=\theta\otimes \H^k\llcorner V(\mu,x)$.
\end{proof}

\subsection{Proof of Theorem \ref{thm:endpoint_implies_unrectif}}
%We will prove that only one of the following two possibilities occurs:
%\begin{itemize}
%    \item[$(i)$] $s=n$;
 %   \item[$(ii)$] $V(\mu,x)=\{0\}\quad\text{for $\mu$-a.e. $x\in\R^n$}$.
%\end{itemize}
Let us fix any point $x$ where $0<\Theta^{*s}(\mu,x)<\infty$ and where Proposition \ref{prop:tangent_bundle} applies. Both conditions are satisfied on a set of full $\mu$-measure. From the density assumption there exists a sequence of radii $r_i\to 0$, $i\in \N$, such that
\[
M^{-1}\leq r_i^{-s}\mu(B(x,r_i))\leq M
\]
for some constant $M>0$ and every $i\in\N$. From the sequence $r_i^{-s}T_{x,r_i}\mu$ we can thus extract a subsequence converging to some non zero  tangent measure $\eta\in \Tan(\mu,x)$. Since $p=\frac{s}{d}q'$, by Lemma \ref{lem:restriction_blowup} the restriction estimate $R_\mu(p\to q)$ passes to the tangent measure $\eta$. By Proposition \ref{prop:tangent_bundle}, $\eta$ is $V(\mu,x)$-invariant, that is of the form $\theta\otimes \H^k\llcorner V(\mu,x)$, with $\theta$ measure on $V(\mu,x)^\perp$. By Lemma \ref{lem:restriction_tensor} there are only two possibilities: either $p=q'$, which forces $s=d$ and then $\mu\ll \mathcal{L}^d$; or $p\neq q'$, and then necessarily $k=0$. If the second condition holds it means that $V(\mu,x)=\{0\}$ at $\mu$-a.e. $x$, which by \cite[Proposition~2.9(iv)]{alberti-marchese} is equivalent to $\mu$ being supported on a $1$-purely unrectifiable set. This concludes the proof.\hfill\qedsymbol

%\printbibliography
\bibliographystyle{plain}
\bibliography{ref.bib}

% \begin{thebibliography}{99}
% \bibitem{alberti-marchese} Alberti, Marchese, On the differentiability...
% \bibitem{bilz} Bilz, Large sets without Fourier restriction theorems

% \bibitem{chen} Chen, A Fourier restriction theorem based on convolution powers

% \bibitem{chen-seeger} Chen, Seeger, Convolution powers of Salem measures with applications

% \bibitem{laba-wang} \L{}aba, Wang, Decoupling and near-optimal restriction estimates for Cantor sets
% \end{thebibliography}
 \end{document}